\newtheorem{thm}{Theorem}[section]
\newtheorem{coro}{Corollary}[section]
\newtheorem{lemma}{Lemma}[section]
\theoremstyle{remark}
\newtheorem{remark}{\textbf{Remark}}[section]
\theoremstyle{definition}
\newtheorem{example}{Example}[section]
\numberwithin{equation}{section}
\newcommand{\cO}{\mathcal{O}}
\newcommand{\N}{\mathbb{N}}
\newcommand{\Z}{\mathbb{Z}}
\newcommand{\Q}{\mathbb{Q}}
\newcommand{\ord}{\mathrm{ord}}
\newcommand{\rep}{\rightharpoonup}
\begin{document}

\title{\textbf{Pythagoras number of quartic orders containing $\sqrt{2}$}}
\author{Zilong HE and Yong HU}
\date{}

\maketitle 

\begin{abstract}
Let $K$ be a quartic number field containing $\sqrt{2}$ and let $\mathcal{O}\subseteq K$ be an order such that $\sqrt{2}\in \mathcal{O}$. We prove that the Pythagoras number of $\mathcal{O}$ is at most 5. This confirms a conjecture of Kr\'{a}sensk\'{y}, Ra\v{s}ka and Sgallov\'a. The proof makes use of Beli's theory of bases of norm generators for quadratic lattices over dyadic local fields.
\end{abstract}




\section{Introduction}

Let $R$ be a commutative ring and let $\Sigma R^2$ denote the set of all elements that can be written as a sum of finitely many squares of elements of $R$. The \emph{Pythagoras number} $\mathcal{P}(R)$ of $R$ is defined as the smallest positive integer $m$ or $\infty$ such that every element in $\Sigma R^2$ can be written as a sum of at most $m$ squares of elements in $R$. The Pythagoras number of fields has been extensively studied  in the literature. The case when $R$ is an order in a number field $K$ is another case of interest to many researchers. In this case, obtaining an upper bound for $\mathcal{P}(R)$ is generally much more difficult than doing the same for $\mathcal{P}(K)$. The interested readers are referred to \cite[Chapter\;7]{Pfister95}, \cite{KrasenskyRaskaSgallova22}, \cite{Tinkova23}, \cite{Krasensky22ArchMath} and the references therein for more information on this topic.

In a recent paper \cite{KrasenskyRaskaSgallova22}, Kr\'asensk\'y, Ra\v{s}ka and Sgallov\'a studied the Pythagoras number of the maximal order $\cO_K$ in a totally real biquadratic number field $K$. They proved, among others, that $\mathcal{P}(\cO_K)\ge 5$ with at most seven exceptions and that  $\mathcal{P}(\cO_K)\le 5$ if $\sqrt{5}\in \cO_K$. Based on results of numerical calculations with the help of computers and by a theoretical analysis of analogy with the case $\sqrt{5}\in \cO_K$, they conjecture that $\mathcal{P}(\cO_K)\le 5$ when $\sqrt{2}\in \cO_K$ (\cite[Conjecture\;1.6 (2)]{KrasenskyRaskaSgallova22}). The goal of this note is to confirm this conjecture. More precisely, we prove the following  analog of \cite[Theorem\;1.3]{KrasenskyRaskaSgallova22}:

\begin{thm}\label{1.1}
  Let $K$ be a quadratic extension of $\Q(\sqrt{2})$ and let $\mathcal{O}\subseteq K$ be an order such that $\sqrt{2}\in \mathcal{O}$.   Then $\mathcal{P}(\cO)\le 5$.
\end{thm}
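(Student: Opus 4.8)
The plan is to reformulate $\mathcal{P}(\cO)\le 5$ as a representation problem for quadratic lattices and then localize it. Recall that an element $\alpha\in\cO$ is a sum of five squares exactly when it is represented by the quadratic $\cO$-lattice $(\cO^5,\,x_1^2+\cdots+x_5^2)$; writing $L=\langle 1,1,1,1,1\rangle$ over $\cO$ and examining it one prime at a time, the question becomes whether $\alpha$ is represented by $L\otimes_\cO\cO_\fp$ at every completion. The first step is to eliminate all places except the dyadic ones. At each real place the only constraint is total positivity, which holds automatically for every $\alpha\in\Sigma\cO^2$; and at each non-dyadic finite prime $\fp$ the localized form is unimodular (since $2\in\cO_\fp^\times$) of rank $5$, which is large enough that it represents every element of $\cO_\fp$, so no condition arises there. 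Consequently an $\alpha\in\Sigma\cO^2$ is a sum of five squares everywhere locally if and only if it is a sum of five squares in each dyadic completion.

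The heart of the argument is thus the dyadic local analysis, and this is where Beli's bases of norm generators enter. Because $\sqrt2\in\cO$, the field $\Q(\sqrt2)$ over which $K$ is quadratic is wildly ramified at $2$, so the dyadic completions of $\cO$ are orders in ramified dyadic local fields—precisely the regime that O'Meara's classical invariants treat awkwardly and for which Beli's formalism is built. I would first enumerate the possible dyadic completions, which amounts to listing, according to the finitely many splitting types of $2$ in $K/\Q(\sqrt2)$ together with the conductor of $\cO$ at $2$, the relevant quadratic local rings. For each of these I would compute, via a basis of norm generators, the norm group, the weight and the associated invariants of the rank-$5$ unimodular lattice $\langle 1,1,1,1,1\rangle$, and show that every element that is locally a sum of squares is in fact a sum of five squares locally. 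This local representation step, carried out uniformly across all the ramified dyadic cases, is the main obstacle: the manipulations with weights and norm generators in the wildly ramified setting are delicate, and one expects the bound $5$ (rather than something smaller) to be forced by the worst of these cases.

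The final step is to pass from "a sum of five squares at every completion" back to "a sum of five squares in $\cO$". Here some care is needed, because $L$ is totally positive definite and a naive Hasse principle for representation by a single definite lattice fails. I would bridge this gap through the representation theory of the genus: local solvability everywhere yields representation of $\alpha$ by the genus of $L$, and the passage from the genus to $L$ itself is the remaining point, to be settled by the usual combination of effective bounds for large $\alpha$ with the extra control that $\sqrt2\in\cO$ provides on the small elements, in parallel with the $\sqrt5$ case of \cite[Theorem\,1.3]{KrasenskyRaskaSgallova21}. Once this is in place, every $\alpha\in\Sigma\cO^2$ is a sum of five squares, giving $\mathcal{P}(\cO)\le5$.
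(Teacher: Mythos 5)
Your plan diverges from the paper at the very first step, and the divergence is fatal. The paper never studies $\langle 1,1,1,1,1\rangle$ over $\cO$ at all: it invokes the descent of Kala--Yatsyna as generalized in \cite[Prop.\;7.5]{KrasenskyRaskaSgallova21}, which reduces $\mathcal{P}(\cO)\le 5$ for \emph{every} order $\cO$ with $\Z[\sqrt2]\subseteq\cO\subseteq K$ to the single statement $g_{\Z[\sqrt2]}(2)\le 5$, i.e.\ that every \emph{binary} quadratic form over the fixed ring $R=\Z[\sqrt2]$ which is a sum of squares of linear forms is a sum of five such squares. Sections 2--4 of the paper are then devoted to proving $g_R(2)=5$: Beli's BONGs over $\Z_2[\sqrt2]$ characterize which binary forms are sums of four squares of linear forms (Lemma\;3.2), the class number one property of $I_4$ over $\Z[\sqrt2]$ (Dzewas) globalizes this, and a subtraction trick gets from four to five. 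This descent is precisely what makes the problem tractable: it replaces infinitely many rings $\cO$ (one for each quartic $K$ and each conductor) by a single Dedekind domain $R$, and it replaces the quinary lattice $I_5$ by the quaternary $I_4$, whose genus consists of one class.

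Your route has two concrete gaps. First, $\cO$ is an arbitrary order, generally non-maximal, hence not a Dedekind domain; localization, genus theory, and in particular Beli's theory of BONGs (developed for lattices over the ring of integers of a dyadic local field, not over non-maximal local orders) do not apply to $I_5$ over the completions of $\cO$ at primes dividing the conductor. Your own description concedes that the dyadic completions are ``orders in ramified dyadic local fields'' --- exactly the objects Beli's formalism does not cover. Second, even for the maximal order, your final step --- passing from representation by the genus of a totally positive definite quinary lattice to representation by the lattice itself via ``effective bounds for large $\alpha$'' --- is not the routine you suggest, and it is not how the $\sqrt5$ case was settled: \cite[Theorem\;1.3]{KrasenskyRaskaSgallova21} also goes through $g_R(2)$ (with Sasaki's theorem supplying $g_R(2)=5$ for the maximal order of $\Q(\sqrt5)$), not through asymptotic genus theory. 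In your setting such effective genus-to-class bounds would have to hold uniformly over infinitely many quartic fields $K$ and infinitely many orders in each, with no class number control; nothing of the sort is available. The missing idea, in short, is the reduction of the problem about the quartic order $\cO$ to a problem about binary forms over $\Z[\sqrt2]$.
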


A strategy for proving Theorem\;\ref{1.1} has already been suggested in \cite[\S\;8.3]{KrasenskyRaskaSgallova22}. The suggestion is mainly motivated by the fact that the genus of the quadratic form $I_4=\langle 1,1,1,1\rangle$ over $\Z[\sqrt{2}]$ consists of a single class, just as over the maximal order of $\Q(\sqrt{5})$. By a generalization of \cite[Cor.\;3.3]{KalaYatsyna21AdvMath} (see \cite[Prop.\;7.5]{KrasenskyRaskaSgallova22}), a key step turns out to be the determination of the invariant $g_R(2)$ for $R=\Z[\sqrt{2}]$. (We will recall the definition of this invariant in \S\;\ref{sec4}.) For the maximal order $R$ of the field $\Q(\sqrt{5})$, the invariant $g_R(2)$ is known from Sasaki's work \cite{Sasaki05}. When working with $\Q(\sqrt{2})$ instead of $\Q(\sqrt{5})$, the main difficulty results from the ramification of the prime number 2, which makes it less easy to analyze integral quadratic forms at the dyadic place.

To determine $g_R(2)$ for $R=\Z[\sqrt{2}]$, we need a characterization of binary forms that are representable as a sum of 4 linear forms over $R$. In \S\S\;\ref{sec2} and \ref{sec3}, we derive this missing ingredient from the representation theory of integral quadratic forms over general dyadic fields. This is made possible by the theory of bases of norm generators developed by Beli (see e.g. \cite{Beli01thesis} and \cite{Beli19}). We feel that this method, though important and powerful, has not been widely used in the literature.

As expected, the result is $g_R(2)=5$ for $R=\Z[\sqrt{2}]$ (Theorem\;\ref{4.1}). From this Theorem\;\ref{1.1} follows by \cite[Prop.\;7.5]{KrasenskyRaskaSgallova22}.

\

\noindent {\bf Terminology and notation.} In the rest of the paper, we use the geometric language of quadratic spaces and lattices in the study of quadratic forms. We refer the reader to O'Meara's book \cite{OMeara00} for standard terminology and notation about them. In particular, the scale and the norm of a quadratic lattice $M$ are denoted by $\mathfrak{s}(M)$ and $\mathfrak{n}(M)$ respectively.

Unless otherwise stated, all quadratic spaces and lattices are assumed to be nonsingular.

Given nonzero elements $a_1,\cdots, a_n$ in a field $F$, we denote by $[a_1,\cdots, a_n]$ the quadratic space defined by the diagonal quadratic form $a_1x_1^2+\cdots+a_nx_n^2$ on the vector space $F^{\oplus n}:=F\oplus\cdots\oplus F$. Similarly, when a Dedekind domain $R$ is fixed,  let $\langle a_1,\cdots, a_n\rangle$ denote the quadratic lattice defined by the same quadratic form on the module $R^{\oplus n}$. For any integer $m\ge 1$, let $I_m$ denote the rank $m$ lattice $\langle 1,\cdots, 1\rangle $.

If $M,\,N$ are quadratic lattices over $R$, we write $N\rep M$ if $N$ is represented by $M$. Similarly for representations of quadratic spaces.

\section{A representability criterion over dyadic fields}\label{sec2}

The main result in this section, Theorem\;\ref{2.5}, is a criterion for the representability of a binary lattice as a sum of linear forms over a general dyadic local field. This result will be deduced from a general representation theorem in the theory of bases of norm generators, as developed by Beli in a series of papers \cite{Beli01thesis,Beli03JNT,Beli06,Beli10TAMS,Beli19}.

Let us briefly review  some key definitions and facts  that will be used in this paper.  The reader is referred to Beli's papers for any	unexplained notation and definition.

Throughout this section, let $F$ be an arbitrary dyadic local field, i.e. a finite extension of the field $\Q_2$ of 2-adic numbers.
Let $\ord: F\to \Z\cup\{\infty\}$ denote the normalized discrete valuation on $F$. We write $\cO_F$ for the valuation ring of $F$ and put $e=\ord(2)$. For any $c\in F^\times:=F\setminus\{0\}$, let $\mathfrak{d}(c)=\bigcap_{x\in F}(c-x^2)\cO_F$. The function
\[
d\;:\; F^\times\longrightarrow \N\cup\{\infty\}\;;\;c\longmapsto d(c):=\min\{\ord(c^{-1}z)\,|\,z\in \mathfrak{d}(c)\}
\]is the called the \emph{order of relative quadratic defect} (\cite[p.127, Definition\;1]{Beli03JNT}). It is well known that
\[
d(F^\times)=\{0,\,2e,\,\infty\}\cup \{1,\,3,\,5,\cdots, 2e-1\}\;.
\]

\begin{lemma}\label{2.1}
  With notation as above, we have $d(-1)\ge e$.
\end{lemma}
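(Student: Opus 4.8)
The plan is to unwind the definition of the order of relative quadratic defect and to exploit that $-1$ is a unit in $\cO_F$. Since $\ord(-1)=0$, for $c=-1$ we have $c^{-1}z=-z$, so that
\[
d(-1)=\min\{\ord(-z)\mid z\in\fd(-1)\}=\min\{\ord(z)\mid z\in\fd(-1)\};
\]
in other words, $d(-1)$ is nothing but the valuation of the ideal $\fd(-1)=\bigcap_{x\in F}(-1-x^2)\cO_F$. Thus the whole statement reduces to bounding this ideal from below (in valuation).

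The key observation is that an intersection of principal ideals is contained in each of its terms, so it suffices to single out one good approximation. Taking $x=1$ gives $-1-1^2=-2$, whence
\[
\fd(-1)\subseteq(-2)\cO_F=2\cO_F,
\]
using that $-1$ is a unit. Because $e=\ord(2)$, every element $z\in 2\cO_F$ satisfies $\ord(z)\ge e$, and a fortiori this holds for every $z\in\fd(-1)$. Combining this with the displayed identity for $d(-1)$ yields $d(-1)\ge e$, which is exactly the claim.

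I expect no serious obstacle here. The only point that requires a little care is reading the definition of $d$ correctly—in particular keeping track of the factor $c^{-1}$ and of the fact that it is harmless when $c$ is a unit—together with the elementary remark that in a discrete valuation ring the intersection of a family of principal ideals has valuation at least that of any single member. Notably, the full structure of the quadratic defect of $-1$ is not needed: the single well-chosen witness $x=1$ already forces the bound, so the argument should be short and essentially computation-free.
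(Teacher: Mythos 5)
Your proof is correct and is essentially the paper's own argument: the paper writes $-1=1+(-2)$ to conclude $\fd(-1)\subseteq 2\cO_F$, which is exactly your witness $x=1$ giving $-1-1^2=-2$, and then both arguments conclude $d(-1)\ge\ord(2)=e$ (the factor $c^{-1}$ being harmless since $-1$ is a unit).
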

\begin{proof}
  From the definition, we have $\mathfrak{d}(-1)=\mathfrak{d}(1+(-2))\subseteq 2\cO_F$. Hence $d(-1)\ge \ord(2)=e$.
\end{proof}

\medskip

Let $ M$ be a quadratic $ \mathcal{O}_{F} $-lattice with associated quadratic form $Q$. A vector $ x\in M$ is called a \textit{norm generator} of $ M $ if $ \mathfrak{n}M=Q(x)\mathcal{O}_{F} $. A sequence of vectors $ x_{1},\ldots,x_{m}$ in $FM$ is called a \textit{Basis Of Norm Generators} (BONG) for $M$ if $ x_{1} $ is a norm generator for $ M $ and $ x_{2},\ldots,x_{m} $ is a BONG for $ \mathrm{pr}_{x_{1}^{\perp}}M$, where $ \mathrm{pr}_{x_{1}^{\perp}} $ denotes the projection from $ FM$ to $(Fx_{1})^{\perp} $, the orthogonal complement of $Fx_1$ in $FM$.
		
		A BONG $ x_{1},\ldots,x_{m} $ is said to be \textit{good}, if $ \ord\, Q(x_{i})\le \ord\,Q(x_{i+2})$ for all $ 1\le i\le m-2 $.
By \cite[Corollary 2.6]{Beli03JNT}, a lattice is uniquely determined by a BONG.  Also, every lattice possesses a good BONG (see \cite[Lemma 4.6]{Beli03JNT} for a proof and \cite[\S 7]{Beli06} for an algorithm).

		We will write $M\cong \prec a_{1},\ldots,a_{m}\succ $ to mean that $M$ has a good BONG $ x_{1},\ldots,x_{m} $ such that $a_i=Q(x_i)$. Using such a good BONG,
we define the $R$-\emph{invariants}
 \[
 R_{i}(M):=\ord(a_i)=\ord(Q(x_i))\,,\;1\le i\le m
  \] and the $\alpha$-\emph{invariants}
  \[
  \begin{split}
  \alpha_i(M):=\min\bigg(\bigg\{\frac{R_{i+1}-R_i}{2}+e\bigg\}&\cup \big\{R_{i+1}-R_j+d(-a_ja_{j+1})\,|\;1\le j\le i\big\}\\
  &\cup \big\{R_{j+1}-R_i+d(-a_ja_{j+1})\,|\,i\le j<m\big\}\bigg)
  \end{split}
  \]for $1\le i\le m-1$. These invariants  are independent of the choice of the good BONG (\cite[Thm. 3.1]{Beli10TAMS}).

\begin{example}\label{2.3}
  Let $m$ be a positive integer and $M=I_m$. It is easy to see that $M\cong \prec a_1,\cdots, a_m\succ$ with
  $a_1=\cdots=a_m=1$. Note that $e\le d(-1)$ by Lemma\;\ref{2.1}. Hence the $R$-invariants and the $\alpha$-invariants of $M$ are given by
  \[
  R_1=R_2=\cdots=R_m=0\quad\text{and}\quad \alpha_1=\cdots=\alpha_{m-1}=\min\{e,\,d(-1)\}=e\,.
  \]

  Let $N$ be a binary $\cO_F$-lattice and assume $N\cong \prec b_1,\,b_2\succ$. We put $S_i=R_i(N)$ and $\beta_1=\alpha_1(N)$. Then $S_i=\ord(b_i)$ and
\[
\beta_1=\min\left\{\frac{S_2-S_1}{2}+e\,,\,S_2-S_1+d(-b_1b_2)\right\}\,.
\]
\end{example}	

To a pair of $\cO_F$-lattices $M$ and $N$ with $n=\mathrm{rank}(N)\le m=\mathrm{rank}(M)$, one can associate the $A$-\emph{invariants} $A_i(M,\,N)$ for $1\le i\le \min\{m-1,\,n\}$ as in \cite[Definition\;4.3]{Beli06}. We do not repeat the definition in the general case, but only give explicit formulas in a special case needed in this paper.

Let $N\cong \prec b_1,\,b_2\succ$ and $M=I_m$ with $m\ge 4$. With the same notation as in Example\;\ref{2.3}, assume further that $S_1\ge 0$ and $S_1+S_2\ge 0$.

Then the two invariants $A_1=A_1(M,N)$ and $A_2=A_2(M,N)$ are given by
\[
A_1=\min\bigg\{-\frac{S_1}{2}+e\,,\;-S_1+e\,,\;-S_1+d(-1)\bigg\}=e-S_1
\]and if $m=4$,
\[
\begin{split}
A_2&=\min\bigg\{-\frac{S_2}{2}+e\,,\;-S_2+\min\{d(-b_1),\,\alpha_3,\,\beta_1\}\bigg\}\\
&=\min\bigg\{e-\frac{S_2}{2}\,,\;\min\{d(-b_1),\,e\}-S_2\,,\;e-\frac{S_1+S_2}{2}\,,\;d(-b_1b_2)-S_1\bigg\}\\
&=\min\bigg\{d(-b_1)-S_2,\,e-S_2,\;e-\frac{S_1+S_2}{2}\,,\;d(-b_1b_2)-S_1\bigg\}
\end{split}
\]
or if $m\ge 5$,
\[
\begin{split}
A_2&=\min\bigg\{-\frac{S_2}{2}+e\,,\;-S_2+\min\{d(-b_1),\,\alpha_3,\,\beta_1\}\,,\;-S_1-S_2+\alpha_4\bigg\}\\
&=\min\bigg\{d(-b_1)-S_2,\,e-S_2,\;e-\frac{S_1+S_2}{2}\,,\;d(-b_1b_2)-S_1\,,\,\alpha_4-S_1-S_2\bigg\}\\
&=\min\bigg\{d(-b_1)-S_2,\,d(-b_1b_2)-S_1\,,\;e-S_1-S_2\bigg\}\,.
\end{split}
\]

We need the following special case of \cite[Theorem 4.5]{Beli06}.

	\begin{thm}\label{2.5}
Let $N\cong \prec b_1,\,b_2\succ$ and $M=I_m$ with $m\ge 4$ as above.

Then $ N\rep M $ if and only if  $ FN\rep FM $ and the following conditions hold:

\begin{enumerate}
  \item $S_1\ge 0$ and $S_1+S_2\ge 0$.
  \item $ d(-b_1)\ge e-S_1$ and $d(-b_1b_2)\ge e-S_2$.
  \item If $m=4$, $S_2<0$, $d(-b_1)+d(-b_1b_2)>2e+S_2$ and $2d(-b_1b_2)>2e+S_1$, then the binary space $[b_1,\,b_2]$ is represented by the ternary space $[1,\,1,\,1]$.
\end{enumerate}
	\end{thm}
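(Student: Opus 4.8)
The plan is to derive the asserted criterion directly from the general representation theorem \cite[Theorem\;4.5]{Beli06} by specializing it to $M=I_m$ and $N\cong\prec b_1,b_2\succ$ and simplifying each of its clauses. Every invariant needed to feed that criterion has already been recorded: by Example\;\ref{2.3} the target satisfies $R_i(M)=0$ for all $i$ and $\al_i(M)=e$, while for the binary lattice $N$ we have $S_i=\ord(b_i)$, $\beta_1=\al_1(N)$, and the mixed invariants $A_1=A_1(M,N)$, $A_2=A_2(M,N)$ are given by the explicit formulas in \eqref{2.4}. So the proof is essentially a bookkeeping argument: substitute these values into Beli's conditions and read off what survives.

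First I would isolate the purely integral (norm and volume) part of Beli's conditions. For $M=I_m$ these compare $R_i(M)=0$ against $R_1(N)=S_1$ and $R_1(N)+R_2(N)=S_1+S_2$, and they collapse exactly to condition $(1)$, namely $S_1\ge 0$ and $S_1+S_2\ge 0$. These inequalities are in any case forced by $N\rep M$, and they are precisely the hypotheses under which the simplified expressions for $A_1$ and $A_2$ in \eqref{2.4} are valid; once $(1)$ is in force one may therefore use those expressions freely in the remaining clauses.

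Next I would treat the clauses governing the relative quadratic defects. Since $A_1=e-S_1$, the first such clause becomes $d(-b_1)\ge e-S_1$, and the clause attached to the determinant $-b_1b_2$, after inserting the value of $A_2$ and using $\al_i(M)=e$, becomes $d(-b_1b_2)\ge e-S_2$; together these are exactly condition $(2)$. The work here is to verify that the other terms in the minima defining $A_1$ and $A_2$—for instance $e-\tfrac{S_1+S_2}{2}$, or the term $\al_4-S_1-S_2$ present only when $m\ge 5$—impose nothing beyond $(2)$ once $FN\rep FM$ and $(1)$ are assumed. This is where the two separate formulas for $A_2$, according to whether $m=4$ or $m\ge 5$, must be tracked carefully.

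The remaining, and genuinely delicate, point is the exceptional clause of Beli's theorem, the one demanding representability of a space of small codimension. The plan is to show that this clause is automatic when $m\ge 5$, since there is then enough room in $M=I_m$, whereas for $m=4$ it specializes, under precisely the inequality pattern $S_2<0$, $d(-b_1)+d(-b_1b_2)>2e+S_2$ and $2d(-b_1b_2)>2e+S_1$, to the requirement that $[b_1,b_2]$ be represented by $[1,1,1]$. Matching Beli's triggering inequalities with these three conditions, and confirming that outside this regime the clause follows from $(1)$, $(2)$ and $FN\rep FM$, is the main obstacle and the step requiring the most care. I expect the $m\ge 5$ redundancy and the defect-term comparisons to reduce cleanly once the exceptional case for $m=4$ is correctly pinned down.
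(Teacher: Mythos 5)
Your strategy coincides with the paper's: specialize Beli's Theorem 4.5 \cite{Beli06} to $M=I_m$ and $N\cong\prec b_1,\,b_2\succ$, insert the invariants from Example\;\ref{2.3} and \eqref{2.4}, and simplify clause by clause. But the proposal has genuine gaps at exactly the two places where the content lies. First, your translation of Beli's condition (ii) is misattributed. That condition reads $\min\{d(b_1),\al_1,\beta_1\}\ge A_1$ and $\min\{d(b_1b_2),\al_2\}\ge A_2$. You claim the second inequality of (2), namely $d(-b_1b_2)\ge e-S_2$, arises from the determinant clause ``after inserting the value of $A_2$''; this is false. Under (1) the second clause imposes nothing new: $\al_2=e\ge e-S_1-S_2\ge A_2$, and $A_2\le d(-b_1b_2)-S_1\le d(-b_1b_2)$ since $S_1\ge 0$, so (modulo a sign change) $d(b_1b_2)\ge A_2$ is automatic. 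The inequality $d(-b_1b_2)\ge e-S_2$ actually comes from the requirement $\beta_1\ge A_1=e-S_1$ inside the \emph{first} clause: one has $\beta_1=\min\{\frac{S_2-S_1}{2}+e,\,S_2-S_1+d(-b_1b_2)\}$, and given $S_1+S_2\ge 0$ this is $\ge e-S_1$ if and only if $d(-b_1b_2)\ge e-S_2$. Your outline never engages with $\beta_1$ at all, so carrying it out as written produces a strictly weaker criterion than (2), and the stated equivalence does not close. Moreover, Beli's condition involves $d(b_1)$ and $d(b_1b_2)$, not $d(-b_1)$ and $d(-b_1b_2)$; exchanging the signs requires the domination principle for $d$ together with $d(-1)\ge e$ (Lemma\;\ref{2.1}), a step absent from your plan.

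Second, for the exceptional clause (Beli's condition (iii)) you only state the intended outcome---automatic for $m\ge 5$, equivalent to (3) for $m=4$---and explicitly defer the verification, calling it ``the main obstacle.'' That verification is the heart of the proof and cannot be waved through. It requires: (a) the corrected reformulation of condition (iii) from \cite{Beli20}, without which the clause is not in a usable form; (b) the explicit evaluation of $d[-a_{13}b_{11}]$ and $d[-a_{14}b_{12}]$ under (1) and (2), using that $d(-b_1b_2)\ge e-S_2>e$ when $S_2<0$; (c) the estimate $d[-a_{13}b_{11}]+d[-a_{14}b_{12}]\le e+\frac{S_2-S_1}{2}+e\le 2e+S_2$, which shows the trigger can never fire when $m\ge 5$; and (d) for $m=4$, the observation that of the three inequalities produced by the minimum, the middle one, $e+\frac{S_2-S_1}{2}+d(-b_1b_2)>2e+S_2$, is implied by $2d(-b_1b_2)>2e+S_1$ (since $S_2<0$), so that the trigger reduces to exactly the two inequalities appearing in condition (3). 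Finally, Beli's theorem has a fourth condition, concerning indices with $R_{i+2}>R_{i+1}+2e$; your proposal never mentions it, whereas one must observe (as the paper does) that it is vacuous here because all $R_i(M)=0$.
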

\begin{proof}
  First, it is easy to see that in our situation condition (i) in \cite[Theorem 4.5]{Beli06} is the same as our condition (1). Condition (ii) of \cite[Theorem 4.5]{Beli06}  reads
  \begin{equation}\label{2.5.1}
      \min\{d(b_1),\,\alpha_1,\,\beta_1\}\ge A_1\quad \text{and}\quad \min\{d(b_1b_2),\,\alpha_2\}\ge A_2\;.
  \end{equation}  As we have seen above, we have $\alpha_1=\alpha_2=e$ and $A_1=e-S_1$. Assuming (1), we have $\alpha_1\ge A_1$ and
  \[
  A_2=\min\bigg\{d(-b_1)-S_2,\,d(-b_1b_2)-S_1\,,\;e-S_1-S_2\bigg\}\le e-S_1-S_2\le \alpha_2=e\,.
  \]Thus, the first inequality in \eqref{2.5.1} is equivalent to the two inequalities $d(b_1)\ge e-S_1$ and $\beta_1=\min\{\frac{S_2-S_1}{2}+e,\,S_2-S_1+d(-b_1b_2)\}\ge e-S_1$. Since $S_1+S_2\ge 0$, we have $\beta_1\ge e-S_1$ if and only if $d(-b_1b_2)\ge e-S_2$.

  Since $d(-1)\ge e\ge \max\{A_2,\,e-S_1\}$, by the domination principle for the function $d$ (\cite[Lemma\;1.1]{Beli03JNT}), the inequalities  $d(b_1)\ge e-S_1$ and $d(b_1b_2)\ge A_2$ are equivalent to $d(-b_1)\ge e-S_1$ and  $d(-b_1b_2)\ge A_2$ respectively.  So \eqref{2.5.1} holds if and only if
  \[
  d(-b_1)\ge e-S_1\,,\; d(-b_1b_2)\ge e-S_2 \quad\text{ and } \quad d(-b_1b_2)\ge A_2\,.
  \]In fact, the first two inequalities here imply the third, because they imply that
  \[
  A_2=\min\bigg\{d(-b_1)-S_2,\,d(-b_1b_2)-S_1\,,\;e-S_1-S_2\bigg\}=e-S_1-S_2\le e-S_2\,.
  \]Thus, we see that condition (ii) of \cite[Theorem 4.5]{Beli06} can be translated into condition (2) in our theorem (when assuming (1)).

 Note that the inequality $0>S_1$ does not hold by (1). By \cite[p.6, Remarks (1)]{Beli20},  condition (iii) of \cite[Theorem 4.5]{Beli06} can be rephrased as follows: \emph{If $S_2<0$ and $d[-a_{13}b_{11}]+d[-a_{14}b_{12}]>2e+S_2$, then $[b_1,\,b_2]\rep [1,1,1]$, where in our context}
 \[
d[-a_{13}b_{11}]=
\min\bigg\{d(-b_1),\,e,\,e+\frac{S_2-S_1}{2},\,S_2-S_1+d(-b_1b_2)\bigg\}
\]and
\[
d[-a_{14}b_{12}]=\begin{cases}
  d(-b_1b_2)\quad & \text{ if }  m=4\,,\\
  \min\{d(-b_1b_2),\,e\}\quad & \text{ if } m\ge 5\,.
\end{cases}
\]
In the case $S_2<0$, from (1) and (2) we get $e+\frac{S_2-S_1}{2}<e$ and $d(-b_1b_2)\ge e-S_2>e$, whence
\[
d[-a_{13}b_{11}]=
\min\bigg\{d(-b_1),\,e+\frac{S_2-S_1}{2},\,S_2-S_1+d(-b_1b_2)\bigg\}
\]
and
\[
d[-a_{14}b_{12}]=\begin{cases}
  d(-b_1b_2)\quad & \text{ if }  m=4\,,\\
 e\quad & \text{ if } m\ge 5\,.
\end{cases}
\]Note that $S_1+S_2\ge 0$ by (1). Thus, if $S_2<0$ and $m\ge 5$, we have
\[
d[-a_{13}b_{11}]+d[-a_{14}b_{12}]\le e+\frac{S_2-S_1}{2}+e=2e+S_2-\frac{S_1+S_2}{2}\le 2e+S_2\;.
\]So there is no need to check condition (iii) of \cite[Theorem 4.5]{Beli06} if $m\ge 5$.

If $S_2<0$ and $m=4$, the inequality $d[-a_{13}b_{11}]+d[-a_{14}b_{12}]>2e+S_2$ means
\[
\min\bigg\{d(-b_1),\,e+\frac{S_2-S_1}{2},\,S_2-S_1+d(-b_1b_2)\bigg\}+d(-b_1b_2)>2e+S_2\,.
\]If $S_2-S_1+d(-b_1b_2)+d(-b_1b_2)>2e+S_2$, we have $d(-b_1b_2)>e+\frac{S_1}{2}$ and thus
\[
e+\frac{S_2-S_1}{2}+d(-b_1b_2)>e+\frac{S_2-S_1}{2}+e+\frac{S_1}{2}=2e+\frac{S_2}{2}>2e+S_2\,.
\]From this we see that condition (iii) of \cite[Theorem 4.5]{Beli06} is equivalent to our condition (3).

Since the inequality $R_{i+2}>R_{i+1}+2e$ does not hold in our context, there is no need to check condition (iv) of \cite[Theorem 4.5]{Beli06}. The theorem is thus proved.
\end{proof}

\begin{coro}\label{2.6}
Let $N\cong \prec b_1,\,b_2\succ$ and $S_i=\ord(b_i)$.

Then the following assertions are equivalent:

\begin{enumerate}
  \item $N\rep I_5$.
  \item $N\rep I_m$ for some $m\ge 2$.
  \item $N\rep I_m$ for some $m\ge 5$.
  \item The following two conditions hold:
  \begin{enumerate}
  \item $S_1\ge 0$ and $S_1+S_2\ge 0$.
  \item $ d(-b_1)\ge e-S_1$ and $d(-b_1b_2)\ge e-S_2$.
\end{enumerate}
\end{enumerate}
\end{coro}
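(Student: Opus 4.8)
The plan is to prove Corollary \ref{2.6} by establishing a cycle of implications among (1), (2), (3), (4). First I would dispose of the easy implications: clearly (1) $\Rightarrow$ (2) since $I_5$ is one of the lattices $I_m$ with $m\ge 2$, and (1) $\Rightarrow$ (3) is immediate as $I_5$ is an $I_m$ with $m\ge 5$. The real content lies in the two directions connecting the abstract representability statements (2), (3) to the explicit arithmetic conditions in (4), and I would route everything through Theorem \ref{2.5}, whose hypotheses require $m\ge 4$, so the cases $m\in\{2,3\}$ need separate handling.

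For the implication (4) $\Rightarrow$ (1), I would apply Theorem \ref{2.5} with $M=I_5$. Condition (4)(a) is exactly condition (1) of the theorem, and (4)(b) is exactly condition (2); condition (3) of the theorem is vacuous when $m\ge 5$, as was shown in the proof of Theorem \ref{2.5}. It remains to verify the space-representation hypothesis $FN\rep FM$, i.e.\ that $[b_1,b_2]$ is represented by $[1,1,1,1,1]$. Over a dyadic local field every quadratic space of dimension $\ge 3$ is universal-enough that $[1,1,1,1,1]$ represents every binary space; more carefully, since $[1,1,1,1,1]$ has dimension $5$ and the complement of a binary subspace would have dimension $3$, the local representation of spaces reduces to a determinant/Hasse-invariant comparison that is automatically satisfiable here. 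This is the one genuinely ``space-level'' point I would need to check, and I expect it to be routine but worth stating explicitly.

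For the reverse directions, (2) $\Rightarrow$ (4) and (3) $\Rightarrow$ (4), the key observation is that conditions (4)(a) and (4)(b) are necessary for representability by \emph{any} $I_m$. I would argue that if $N\rep I_m$ for some $m\ge 2$, then in particular $N$ embeds into $I_{m'}$ for all $m'\ge m$ (by adjoining orthogonal summands $\langle 1\rangle$), so without loss of generality $m\ge 4$, and then Theorem \ref{2.5} forces conditions (1) and (2), which are precisely (4)(a) and (4)(b). The small-rank cases $m=2,3$ are subsumed because $I_m\subseteq I_4$, so $N\rep I_m$ implies $N\rep I_4$. This simultaneously handles (2) $\Rightarrow$ (4) and (3) $\Rightarrow$ (4), since any $m\ge 5$ is in particular some $m\ge 2$.

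The logical skeleton then closes the cycle: (4) $\Rightarrow$ (1) $\Rightarrow$ (3) $\Rightarrow$ (4) gives the equivalence of (1), (3), (4), and (1) $\Rightarrow$ (2) $\Rightarrow$ (4) $\Rightarrow$ (1) folds (2) into the same equivalence class. The main obstacle I anticipate is not any single implication but the bookkeeping around the hypothesis $m\ge 4$ in Theorem \ref{2.5}: I must ensure that representability by a \emph{small} $I_m$ is correctly upgraded to representability by $I_4$ (via the monotonicity $I_m\hookrightarrow I_{m+1}$) before the theorem can be invoked, and conversely that the space-representation condition $FN\rep FM$ does not secretly impose an extra constraint beyond (4)(a)–(4)(b). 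I would verify the latter by checking that over a dyadic field $[1,1,1,1,1]$ represents all binary spaces, so that the only obstructions to lattice representation are the lattice-theoretic conditions in (4).
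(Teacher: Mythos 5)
Your proposal is correct and takes essentially the same route as the paper's proof: both close a cycle of implications by using the monotonicity $I_m\rep I_{m+k}$ to upgrade rank, invoking Theorem \ref{2.5} for the necessity of (4)(a)--(b) and for the sufficiency direction with $M=I_5$, and appealing to the fact (\cite[63:21]{OMeara00}) that a quadratic space of dimension at least $5$ over a local field represents all binary spaces, so that $FN\rep FM$ is automatic. The only differences are cosmetic (the order of implications, and upgrading to rank $4$ rather than rank $5$ for necessity); just replace your loose ``dimension $\ge 3$ is universal-enough'' justification with the precise citation of \cite[63:21]{OMeara00}.
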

\begin{proof}
  Trivially, (1)$\Rightarrow$(2). Since $I_m\rep I_{m+3}$, we have (2)$\Rightarrow$(3). A quadratic space of dimension at least 5 represents all binary spaces (\cite[63:21]{OMeara00}). So the condition $FN\rep FM$ holds automatically if $M=I_m$ with $m\ge 5$. Thus, from Theorem\;\ref{2.5} we see that (3)$\Rightarrow$(4)$\Rightarrow$(1).
\end{proof}

\begin{remark}
  To test representability by $I_m$ over any dyadic local field, one can also use the Third Main Theorem in Riehm's work \cite{Riehm64AJM}, which relies only on the classical invariants of lattices as presented in \cite{OMeara00}. However, we feel that our criterion in Theorem\;\ref{2.5} (especially in the case $m=4$), obtained by using the theory of BONGs, is more convenient for our purpose.
\end{remark}

\section{Sums of 4 squares of linear forms over $\mathbb{Z}_2[\sqrt{2}]$}\label{sec3}

In this section, let $F=\Q_2(\sqrt{2})$.

\medskip

Let us recall some useful facts about the field $F$.

Clearly, a uniformizer in $F$ is $\sqrt{2}$ and the ramification index $e=\ord(2)$ is 2. Note that $-1=(1+\sqrt{2})^2-(4+2\sqrt{2})$ and $\ord(4+2\sqrt{2})=3<4=2e$. So we have $d(-1)=3$ by \cite[63:5]{OMeara00}.

Since $[F:\Q_2]$ is even, $-1$ is a sum of two squares in $F$ (\cite[Chapter 3, 1.2(6)]{Pfister95}). Thus, the ternary space $[1,1,1]$ is isotropic and the quaternary space $[1,1,1,1]$ is hyperbolic over $F$. In particular, $[1,1,1,1]$ represents all binary spaces over $F$.

The unique quadratic unramified extension of $F$ is $F(\sqrt{5})$. So by \cite[63:3 and 63:4]{OMeara00}, an element $c\in F^{\times}$ satisfies $d(c)=4=2e$ if and only if $c\in 5F^{\times 2}$, i.e. $c/5$  is a square in $F$.

\begin{lemma}\label{3.2}
  Let $Q$ be a binary quadratic form with coefficients in $\cO_F$. Suppose that $Q$ is a sum of finitely many squares of linear forms over $\cO_F$.

    Then $Q$ is not a sum of $4$ squares of linear forms over $\cO_F$ if and only if $Q$ is equivalent to the form $G(x,y):=2\sqrt{2}(x^2+xy+y^2)$.
\end{lemma}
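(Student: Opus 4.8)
The plan is to translate the statement into a lattice-representation problem and feed it into Theorem~\ref{2.5}. Let $N$ be the binary $\cO_F$-lattice attached to $Q$, fix a good BONG $N\cong\prec b_1,b_2\succ$ and put $S_i=\ord(b_i)$; here $e=2$. Saying that $Q$ is a sum of $k$ squares of linear forms over $\cO_F$ is the same as $N\rep I_k$. By Corollary~\ref{2.6}, the hypothesis that $Q$ is a sum of finitely many squares is equivalent to conditions (1) and (2) of Theorem~\ref{2.5}, i.e. $S_1\ge0$, $S_1+S_2\ge0$, $d(-b_1)\ge2-S_1$ and $d(-b_1b_2)\ge2-S_2$. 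Since by \eqref{3.1} the space $[1,1,1,1]$ represents every binary space over $F$, the condition $FN\rep FI_4$ holds automatically. Thus Theorem~\ref{2.5} with $m=4$ shows that, under the standing hypothesis, $N\rep I_4$ \emph{if and only if} condition (3) holds. Consequently $Q$ is not a sum of $4$ squares precisely when condition (3) fails, that is, when
\[
S_2<0,\quad d(-b_1)+d(-b_1b_2)>4+S_2,\quad 2\,d(-b_1b_2)>4+S_1\quad\text{and}\quad [b_1,b_2]\not\rep[1,1,1].
\]

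First I would record a convenient form of the last clause. As $[1,1,1]$ is, by \eqref{3.1}, the isotropic ternary space over $F$ of determinant $1$ and trivial Hasse invariant, comparing determinants and Hasse symbols yields $[b_1,b_2]\rep[1,1,1]\iff(-b_1,-b_2)=1$ (equivalently, $[b_1,b_2]$ represents $-1$). Next I would use the displayed inequalities, together with (1) and (2), to reduce the invariants to a very short list. Because the residue field of $F$ is $\mathbb F_2$, one has $d(c)=0$ exactly when $\ord(c)$ is odd; hence $d(-b_1b_2)\ge2-S_2\ge3$ forces $S_1+S_2$ to be even and $d(-b_1b_2)\in\{3,4,\infty\}$. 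If $d(-b_1b_2)=\infty$ then $b_1b_2\equiv-1$, so $-b_2\equiv b_1$ and $(-b_1,-b_2)=1$: this profile is represented, hence excluded. If $d(-b_1b_2)=3$, then $S_2=-1$ and $S_1=1$ are forced, but then $S_1$ is odd and $d(-b_1)=0<2-S_1$, contradicting (2). Finally, if $d(-b_1b_2)=4$ (equivalently $-b_1b_2\in5F^{\times2}$ by \eqref{3.1}), the inequality $2\cdot4>4+S_1$ together with the positivity and parity constraints leaves exactly $(S_1,S_2)=(3,-1)$ and $(S_1,S_2)=(2,-2)$.

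It then remains to test $[b_1,b_2]\rep[1,1,1]$ for these two profiles. For $(S_1,S_2)=(2,-2)$ both $-b_1$ and $-b_2$ are units up to squares, and I would compute $(-b_1,-b_2)=(-b_1,5)$, which is $1$ because $-b_1$ has even valuation and so lies in the norm group of the unramified extension $F(\sqrt5)$; hence this profile is represented and is excluded. For $(S_1,S_2)=(3,-1)$ the analogous computation, with $-b_1,-b_2$ of odd valuation and $(-b_1)(-b_2)\equiv-5$, reduces $(-b_1,-b_2)$ to $(\sqrt2,5)$, and this equals $-1$ since $\sqrt2$ has odd valuation and is therefore not a norm from $F(\sqrt5)$. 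So this last profile, and only it, gives $[b_1,b_2]\not\rep[1,1,1]$. To finish I would check that these surviving data determine $N$ up to isometry: the defect $d(-b_1b_2)=4$ forces $\det[b_1,b_2]\equiv-5$, the odd valuation of $b_1$ forces $(b_1,b_2)=(b_1,5)=-1$, and together with $R_1=3$, $R_2=-1$, $\alpha_1=0$ this pins down the isometry class by Beli's classification; a direct computation shows that $G\cong\prec2\sqrt2,\tfrac{3\sqrt2}{2}\succ$ realises precisely these invariants, whence $N\cong G$.

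The main obstacle is the dyadic arithmetic of the last two paragraphs: the correct evaluation of the Hilbert symbols and quadratic defects over the ramified field $\Q_2(\sqrt2)$---in particular the parity argument eliminating $d(-b_1b_2)=3$ and the norm-group argument eliminating the profile $(2,-2)$---and the verification that the single surviving profile is one isometry class rather than several. It is here that Beli's invariants $R_i$ and $\alpha_i$, combined with the forced quadratic space $[b_1,b_2]$, are needed to conclude that $N$ is uniquely $G$.
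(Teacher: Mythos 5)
Your proposal is correct, and up to the very last step it is the same argument as the paper's: translate the lemma into $N\rep I_4$, use Corollary\;\ref{2.6} to absorb the standing hypothesis into conditions (1)--(2) of Theorem\;\ref{2.5}, note that $FN\rep FI_4$ is automatic, so that non-representability by $I_4$ is precisely the failure of condition (3); then the defect and valuation arithmetic of $F=\Q_2(\sqrt2)$ (in particular $d(c)=0$ exactly when $\ord(c)$ is odd, and $d(-b_1b_2)=4$ iff $-b_1b_2\in 5F^{\times 2}$), together with the Hilbert-symbol criterion $[b_1,b_2]\rep[1,1,1]\Leftrightarrow(-b_1,-b_2)_F=1$, forces $(S_1,S_2)=(3,-1)$, $d(-b_1b_2)=4$ and $(-b_1,-b_2)_F=(-b_1,5)_F=-1$, which is exactly the paper's intermediate claim. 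One small slip in wording: after fixing $d(-b_1b_2)=4$, positivity and parity alone leave \emph{three} profiles, $(1,-1)$, $(3,-1)$ and $(2,-2)$; the profile $(1,-1)$ must also be eliminated, but this follows from the same argument you already used in the $d(-b_1b_2)=3$ case (odd $S_1$ gives $d(-b_1)=0<2-S_1$, violating condition (2) of Theorem\;\ref{2.5}), so this is an omission of statement, not of substance. Where you genuinely diverge is the identification of the unique surviving isometry class: the paper computes $\fs(N)=\sqrt2\,\cO_F$ and $\fn(N)=2\sqrt2\,\cO_F$ from the BONG invariants via \cite[Cor.\;3.4\,(iii)]{Beli03JNT}, notes that $FN$ is anisotropic, and then quotes the classical result \cite[93:11]{OMeara00} on binary lattices with $\fn=2\fs$ to conclude $N\cong \sqrt2 A(2,2)\cong G$; you instead remain inside the BONG formalism and invoke Beli's classification theorem (from \cite{Beli10TAMS}), using that $\alpha_1(N)=\min\{(S_2-S_1)/2+e,\;S_2-S_1+d(-b_1b_2)\}=0$, so that the classification's extra condition $d[a_1b_1]\ge \alpha_1$ is vacuous and binary lattices with your data are determined by $(FN,R_1,R_2,\alpha_1)$; your computation $G\cong\;\prec 2\sqrt2,\,3\sqrt2/2\succ$, with determinant $\equiv -5$, symbol $-1$, $R$-invariants $(3,-1)$ and $\alpha_1=0$, is correct, so this endgame also closes the proof. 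Both finishes are valid: the paper's is lighter, needing only scale, norm and O'Meara's classical classification, while yours never leaves Beli's invariants but must quote the full classification theorem and verify its hypotheses, which is exactly where your observation $\alpha_1=0$ is essential.
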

\begin{proof}
  Let $N$ be the lattice defined by $Q$ on the module $\cO_F\oplus\cO_F$. Assume $N\cong \prec b_1,\,b_2\succ$ and put $S_i=\ord(b_i)$.  As we have mentioned at the beginning of this section, $FN$ is represented by  $[1,1,1,1]$. Here we have assumed that $N$ is represented by $I_m$ for some (large) $m$. So by Theorem\;\ref{2.5} (and Corollary\;\ref{2.6}), $N$ is not represented by $I_4$ if and only if the following conditions hold:

   (1) $S_1\ge -S_2>0$.

   (2) $d(-b_1)\ge 2-S_1$ and $d(-b_1b_2)\ge 2-S_2$.

   (3) $d(-b_1)+d(-b_1b_2)>4+S_2$.

   (4) $2d(-b_1b_2)>4+S_1$.

   (5) The binary space $[b_1,\,b_2]$ is not represented by $[1,1,1]$.

We claim that the above 5 conditions are equivalent to
\[
S_1=3\,,\;S_2=-1\quad\text{and}\quad d(-b_1b_2)=4\,.
\]
We will use the fact that $d(F^{\times})=\{0,\,4,\,\infty,\,1,\,3\}$. As we have said at the beginning of this section, the equality $d(-b_1b_2)=4$ means that $-b_2\in 5b_1 F^{\times 2}$. When $S_1=3$, this implies that the Hilbert symbol $(-b_1,\,-b_2)_F$ is equal to $(-b_1,\,5)_F=-1$ (\cite[63:11a]{OMeara00}). But  the binary space $[b_1,\,b_2]$ is represented by $[1,1,1]\cong [b_1,\,-b_1,\,-1]$ if and only if the Hilbert symbol $(-b_1,\,-b_2)_F$ equals 1. From this the sufficiency part of the claim follows easily.

Now let us prove the necessity part of the claim.

If $S_1=1$, then we have $d(-b_1)=0$, contradicting the first inequality in (2). So we must have $S_1\ge 2$. This combined with (4) yields $d(-b_1b_2)>3$. On the other hand, since $[1,1,1]$ is isotropic, condition (5) implies that $-b_1b_2$ is not a square in $F$, or equivalently $d(-b_1b_2)<\infty$. Hence $d(-b_1b_2)=4$.

From the second inequality in (2) we deduce that $S_2\ge -2$, and from (4) we see $S_1<4$. We have already shown $S_1\ge 2$. Thus by (1) we have $S_2\in \{-1,\,-2\}$ and $S_1\in \{2,\,3\}$. As we mentioned in the proof of sufficiency, the condition $d(-b_1b_2)=4$ implies that $(-b_1,\,-b_2)_F=(-b_1,\,5)_F$. If $S_1=2$, the Hilbert symbol $(-b_1,\,5)_F$ equals 1, which leads to a contradiction to (5). Therefore, $S_1=3$.

If $S_2=-2$, then $S_1+S_2$ is odd, which implies $d(-b_1b_2)=0$, a contradiction. So we have $S_2=-1$. This proves our claim.

Now, using \cite[Cor.\;3.4 (iii)]{Beli03JNT} we can conclude that $N$ has scale $\mathfrak{s}(N)=\sqrt{2}\cO_F$ and norm $\mathfrak{n}(N)=2\sqrt{2}\cO_F$. Note that the space $FN$ is anisotropic. By \cite[93:11]{OMeara00}, $N$ is isomorphic to the lattice represented by the matrix $\sqrt{2}A(2,2)=\begin{pmatrix}
  2\sqrt{2} & \sqrt{2}\\
  \sqrt{2} & 2\sqrt{2}
\end{pmatrix}$. This is exactly the binary lattice corresponding to the binary form $2\sqrt{2}(x^2+xy+y^2)$. The lemma is thus proved.
\end{proof}

\begin{remark}
In Lemma\;\ref{3.2} one cannot drop the assumption that $Q$ is a sum of finitely many squares of linear forms over $\cO_F$. In fact, the diagonal form $\sqrt{2}x^2+\sqrt{2}y^2$ is not a sum of squares of linear forms over $\cO_F=\Z_2[\sqrt{2}]$.

In general, if $E$ is a finite extension of $\Q_2$ in which $2$ is ramified, then the uniformizer $\pi$ of $E$ is not a sum of squares in $\cO_E$, and hence the diagonal lattice $\langle \pi,\,\pi\rangle$ is not represented by $I_m$ for any $m\ge 1$. To see this, we can consider  the norm group $\mathfrak{g}(I_m)$. We have
$\mathfrak{g}(I_m)=\mathfrak{g}(\langle 1\rangle)=\cO_E^2+2\cO_E$.
If $\mathrm{ord}_E$ denotes the normalized discrete valuation on $E$, then $\mathrm{ord}_E(\pi-\alpha^2)\in\{0,1\}$ for all $\alpha\in \cO_E$. So we have $\pi\notin \mathfrak{g}(I_m)=\cO_E^2+2\cO_E$ when $\ord_E(2)>1$.
\end{remark}

\section{Proof of main result}\label{sec4}

For any commutative ring $R$ and any positive integer $k$, the invariant $g_R(k)$ is  defined as the smallest positive integer $n$ (or $\infty$) such that every $k$-ary quadratic form that can be written as a finite sum of squares of linear forms over $R$ is a sum of $n$ squares of linear forms.

As we have said in the introduction, Theorem\;\ref{1.1} follows from the following result, which is an analog of a theorem of Sasaki \cite{Sasaki05} (see also \cite[Thm.\;7.7]{KrasenskyRaskaSgallova22}).

\begin{thm}\label{4.1}
  We have $g_{\Z[\sqrt{2}]}(2)=5$.
\end{thm}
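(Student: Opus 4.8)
The plan is to prove the two inequalities $g_{\Z[\sqrt2]}(2)\le 5$ and $g_{\Z[\sqrt2]}(2)>4$ separately, in both cases passing between the global ring $R=\Z[\sqrt2]$ and its completions by a local-to-global principle. Write $F=\Q_2(\sqrt2)$ for the completion of $\Q(\sqrt2)$ at the unique (ramified) dyadic prime $v_2$; the two archimedean places correspond to the embeddings $\sqrt2\mapsto\pm\sqrt2$. The key global input is that the genus of $I_n$ over $R$ consists of a single class for the relevant $n$ (this is recalled for $I_4$ in the introduction and holds likewise for $I_5$), so that by the representation theorem for genera a binary $R$-lattice $N$ satisfies $N\rep I_n$ globally if and only if $N_v\rep (I_n)_v$ at every place $v$. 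Since at the non-dyadic finite places an integral binary form is automatically a local sum of squares, and at the archimedean places the only constraint is total positivity, the single genuine local condition in all that follows is the one at $v_2$, which is exactly what Sections\;\ref{sec2}--\ref{sec3} control.

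For the upper bound, let $Q$ be a binary form over $R$ that is a sum of finitely many squares; thus $Q$ is totally positive definite and $Q\rep I_m$ over $R$ for some $m$. I claim $Q\rep I_5$, and it suffices to check $Q_v\rep (I_5)_v$ at every place $v$. At the two archimedean places $(I_5)_v$ is a positive definite quinary space, hence represents the positive definite binary form $Q_v$. At a non-dyadic finite place the quinary space of $(I_5)_v$ represents every binary space by \cite[63:21]{OMeara00}, and, all lattices being unimodular there, the lattice-level representation follows. At $v_2$ we have $Q_v\rep (I_m)_v$, so Corollary\;\ref{2.6} gives $Q_v\rep (I_5)_v$. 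By the single-class property $Q\rep I_5$, so $g_R(2)\le 5$.

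For the lower bound I must exhibit a binary $R$-form that is a sum of squares but not a sum of $4$ squares. By the single-class property of $I_4$ and the same reduction, a totally positive binary form that is everywhere locally a sum of squares fails to be a sum of $4$ squares precisely when $Q_v\not\rep (I_4)_v$ at some place; this cannot happen at the archimedean or non-dyadic finite places, so the obstruction can occur only at $v_2$, and by Lemma\;\ref{3.2} it occurs there exactly when $Q_v\cong G$ with $G(x,y)=2\sqrt2(x^2+xy+y^2)$. Hence it is enough to construct a totally positive binary $R$-form $Q$ whose dyadic completion is isometric to $G$: by the upper bound such a $Q$ is represented by $I_5$, hence is a sum of squares, while by Lemma\;\ref{3.2} it is not represented by $I_4$, hence is not a sum of $4$ squares. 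This yields $g_R(2)>4$, and together with the upper bound, $g_R(2)=5$.

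The construction of this $Q$ is the step requiring the most care and is where I expect the main difficulty. Since $2\sqrt2$ is negative under $\sqrt2\mapsto-\sqrt2$, the form $G$ is itself not totally positive, so one cannot simply take $Q=G$; instead I would use approximation at $v_2$ to choose an integral Gram matrix with diagonal entries $a,c$ and off-diagonal entry $b$ whose $v_2$-localization realizes the invariants $S_1=3$, $S_2=-1$, $d(-b_1b_2)=4$ isolated in the proof of Lemma\;\ref{3.2} (equivalently, localizes to $G$), while taking $a,c$ totally positive and large compared with $b$ so as to force positive definiteness at both archimedean places. Matching the delicate dyadic isometry class against the archimedean positivity simultaneously is the crux; once $Q$ is in hand, that it is a sum of $5$ but not of $4$ squares is a formal consequence of Corollary\;\ref{2.6}, Lemma\;\ref{3.2}, and the single-class property.
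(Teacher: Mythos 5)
Your overall framework (reduce to local conditions via a one--class genus, use Corollary~\ref{2.6} at the dyadic place, Lemma~\ref{3.2} for the obstruction) is the right one, but both of your bounds hinge on a claim that you never justify and that the paper pointedly never makes: that the genus of $I_5$ over $\Z[\sqrt2]$ has a single class (``holds likewise for $I_5$''). Dzewas's theorem cited in the paper is specifically about $I_4$; one--class--ness is not inherited as the rank grows, and here it in fact fails. Take $M=\{x\in I_5: x_1+x_2+x_3+x_4\equiv 0 \pmod{\sqrt2}\}$ and $L=M+\Z[\sqrt2]f$ with $f=(e_1+e_2+e_3+e_4)/\sqrt2$, so $Q(f)=2$. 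Then $L$ is integral, unimodular and totally positive definite; it agrees with $I_5$ at every prime other than $\fp=(\sqrt2)$ (both contain $M$ with index $\fp$), while at $\fp$ one computes from the basis $f,\,e_1-e_2,\,e_2-e_3,\,\sqrt2 e_4,\,e_5$ that $L_\fp\cong \langle 1\rangle\perp A(2,2)\perp A(2,2)\cong\langle 1\rangle\perp\bH\perp\bH\cong\langle 1,1,-1,-1,1\rangle\cong (I_5)_\fp$: the second isometry holds because both rank--$4$ pieces are even unimodular of square discriminant (\cite[93:11]{OMeara00}), the third by splitting hyperbolic pairs, and the last because over $F=\Q_2(\sqrt2)$ one has $(-1,-1)_F=1$ (so the spaces agree) together with Beli's classification, using $d(-1)=3\ge e=2$; note that your own Corollary~\ref{2.6} already shows $\langle -1,-1\rangle\rep I_5$ dyadically. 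So $L\in\mathrm{gen}(I_5)$. Yet $L$ has exactly two vectors of norm $1$ (namely $\pm e_5$: any vector involving $f$ nontrivially has first four coordinates of the form $a_i/\sqrt2$ with $a_i$ odd, forcing $\mathrm{Tr}\,Q\ge 4>2$), whereas $I_5$ has ten, so $L\not\cong I_5$. Hence ``represented by $I_5$ everywhere locally'' does not imply ``represented by $I_5$'', and this breaks your upper bound \emph{and} the step of your lower bound that certifies your form is a sum of squares at all. This is exactly the trap the paper's proof is built to avoid: it uses only the one--class property of $I_4$, writing $Q=\sum_iL_i^2$, locating an $L_{i_0}$ whose square represents an element not divisible by $2\sqrt2$, so that $Q-L_{i_0}^2$ is dyadically not isometric to $G$, hence a sum of four squares dyadically by Lemma~\ref{3.2} and then globally by $h(I_4)=1$, making $Q$ a sum of five.

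Separately, your lower bound is left unfinished at the point you yourself call the crux: you correctly see that one needs a totally positive global form whose dyadic completion is $G$, and that $G$ itself is not totally positive, but you only sketch an approximation strategy. The paper does this in one line: $f=(16+2\sqrt2)(x^2+xy+y^2)$ is totally positive and $(16+2\sqrt2)/(2\sqrt2)=1+4\sqrt2$ is a square in $\Z_2[\sqrt2]$, so $f\cong G$ dyadically; moreover it exhibits an explicit identity writing $f$ as a sum of five squares of linear forms, which is precisely the class--number--free certificate that $f$ is a sum of squares --- the certificate your argument lacks once the $I_5$ claim falls. (A scaling in your spirit does work, e.g.\ $\mu=2\sqrt2+2^m$ with $m\ge 4$ is totally positive and $\mu/(2\sqrt2)$ is a dyadic square, but this has to be said, not deferred.) A minor slip: at non--dyadic finite places your justification ``all lattices being unimodular there'' is incorrect, since the binary lattice being represented need not be unimodular; the correct statement is that every binary lattice over a non--dyadic local ring is a sum of four squares of linear forms, which is the reference the paper uses.
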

\begin{proof}
  The binary form $f(x,y):=(16+2\sqrt{2})(x^2+xy+y^2)$ is a sum of linear forms over $\Z[\sqrt{2}]$, in view of the identity
  \[
  f=((1+\sqrt{2})x+y)^2+(1+\sqrt{2})^2y^2+(x+y)^2+3(2x+y)^2+8y^2\,.
  \]Over $\Z_2[\sqrt{2}]$ the form $f$ is equivalent to the form $G$ in Lemma\;\ref{3.2}, because $\frac{16+2\sqrt{2}}{2\sqrt{2}}=1+4\sqrt{2}$ is a square in $\Z_2[\sqrt{2}]$ (\cite[63:1]{OMeara00}). This proves the inequality $g_{\Z[\sqrt{2}]}(2)\ge 5$.

  To prove the inequality in the other direction, consider a binary quadratic form $Q(x,\,y)$ over $\Z[\sqrt{2}]$ and suppose it can be written as a finite sum
  $Q=\sum_iL_i^2$ where $L_i=a_ix+b_iy$ with $a_i,\,b_i\in\Z_2[\sqrt{2}]$. Over a non-dyadic completion, every binary form is a sum of four linear forms (see e.g. \cite[Prop.\;3.3]{HeHuXu23}). Since the $\Z[\sqrt{2}]$-lattice $I_4$ has class number 1 (\cite[p.272, Satz 24]{Dzewas60}), $Q$ is a sum of 4 squares of linear forms over $\Z[\sqrt{2}]$ if and only if it is so over $\Z_2[\sqrt{2}]$. So by Lemma\;\ref{3.2}, we may assume $Q$ is equivalent to the form $G=2\sqrt{2}(x^2+xy+y^2)$ over $\Z_2[\sqrt{2}]$. This means that $Q$ represents only elements that are divisible by $2\sqrt{2}$.

  If all the $L_i^2$ represent only elements divisible by $2\sqrt{2}$, then $a^2_i$ and $b^2_i$ are divisible by $2\sqrt{2}$, hence $2$ divides $a_i$ and $b_i$. But this would imply that the integral values represented by $Q$ are all divisible by 4. This contradicts the fact that $2\sqrt{2}$ is represented by $Q$ over $\Z_2[\sqrt{2}]$.  Hence, one of $L_i^2$ represents an element not divisible by $2\sqrt{2}$. Then $Q-L_i^2$ is a sum of 4 linear forms over the dyadic completion by Lemma\;\ref{3.2}. Since $I_4$ has class number 1, this shows that $Q$ is a sum of 5 squares of linear forms over $\Z[\sqrt{2}]$.
\end{proof}

\noindent \emph{Acknowledgements.} We thank Prof.\,Fei Xu for helpful discussions. The authors were supported by a grant from the National Natural Science Foundation of China (no.\,12171223) and the Guangdong Basic and Applied Basic Research Foundation (no.\,2021A1515010396).

\addcontentsline{toc}{section}{\textbf{References}}

\bibliographystyle{alpha}

\bibliography{SOSQsqrt2}



Contact information of the authors:

\

Zilong HE

\medskip

School of Computer Science and Technology,

Dongguan University of Technology,

Dongguan 523808, China

Email: zilonghe@connect.hku.hk

\

Yong HU

\medskip

Department of Mathematics

Southern University of Science and Technology


Shenzhen 518055, China


Email: huy@sustech.edu.cn

\

\end{document}